\newtheorem{theorem}{Theorem}[section]
\newtheorem{lemma}[theorem]{Lemma}
\theoremstyle{definition}
\theoremstyle{remark}
\newtheorem{remark}[theorem]{Remark}
\numberwithin{equation}{section}
\newcommand{\R}{\mbox{$\mathbb{R}$}}
\newcommand{\C}{\mbox{$\mathbb{C}$}}
\newcommand{\K}{\mbox{$\mathbb{K}$}}
\newcommand{\fii}{\varphi}
\newcommand{\ip}[2]{\left\langle#1|#2\right\rangle}
\newcommand{\sip}[2]{\left[#1|#2\right]}
\DeclareMathOperator{\dist}{dist}
\DeclareMathOperator{\lin}{lin}
\begin{document}

\setcounter{page}{1}

\title[On non-linear mappings preserving  the semi-inner product]
{On non-linear mappings preserving  the semi-inner product}
\author[T. Kobos \MakeLowercase{and} P. W\'ojcik]
{Tomasz Kobos$^1$ \MakeLowercase{and} Pawe\l{} W\'ojcik$^2$}
\address{$^1$Faculty of Mathematics and Computer Science, Jagiellonian University, \L ojasiewicza 6, 30-348 Krak\'ow, Poland}
\email{tomasz.kobos@uj.edu.pl}
\address{$^2$Institute of Mathematics, Pedagogical University of Cracow, Podchor\c a\.zych~2, 30-084 Krak\'ow, Poland}
\email{pawel.wojcik@up.krakow.pl}

\subjclass[2010]{46B20, 46C50, 46B04.}
\keywords{Semi-inner product; Smoothness; Quotient Space; Gowers-Maurey space}
\begin{abstract}
We say that a smooth normed space $X$ has a \textit{property (SL)}, if every mapping $f:X \to X$ preserving the semi-inner product on $X$ is linear. It is well known that every Hilbert space has the property (SL) and the same is true for every finite-dimensional smooth normed space. In this paper, we establish several new results concerning the property (SL). We give a simple example of a smooth and strictly convex Banach space which is isomorphic to the space $\ell_p$, but without the property (SL). Moreover, we provide a characterization of the property (SL) in the class of reflexive smooth Banach spaces in terms of subspaces of quotient spaces. As a consequence, we prove that the space $\ell_p$ have the property (SL) for every $1 < p < \infty$. Finally, using a variant of the Gowers-Maurey space, we construct an infinite-dimensional uniformly smooth Banach space $X$ such that every smooth Banach space isomorphic to $X$ has the property (SL).
\end{abstract} \maketitle
\section{Introduction}
Let $(X, \|\!\cdot\!\|)$ be a normed space over $\K$, where $\K=\R$ or $\K=\C$. A \emph{semi-inner product} on $X$ (in the sense of Lumer-Giles) is a mapping
$\sip{\cdot}{\cdot}\colon X\times X\to \K$ satisfying the conditions:
\begin{itemize}
\item[(sip1)] $\sip{\alpha x+\beta
y}{z}=\alpha\sip{x}{z}+{\beta}\sip{y}{z},\ \ x,y,z\in X,\
\alpha,\beta\in\K$;
\item[(sip2)] $\sip{x}{\lambda
y}=\overline{\lambda}\sip{x}{y},\ \ x,y\in X,\ \lambda\in\K$;
\item[(sip3)] $\big|\sip{x}{y}\big|\leq\|x\|\!\cdot\!\|y\|,\ \ x,y\in X$;
\item[(sip4)] $\sip{x}{x}=\|x\|^2,\ \ x\in X$.
\end{itemize}
Thus, in contrast to an inner product, a semi-inner product is not necessarily symmetric/conjugate-symmetric. This notion was first introduced by Lumer in \cite{lumer} and then later redefined by Giles in \cite{giles}. The main motivation behind it, was to transfer certain Hilbert space arguments to the setting of general Banach spaces. In fact, for every normed space there exists a semi-inner product generating the norm on $X$. This notion was later explored by many authors, as there exist some significant connections between a semi-inner product and the geometry of the underlying space. We refer to \cite{dragomir-sip} for some thorough investigation of the semi-inner products in Banach spaces. We shall quickly review some-well known properties of semi-inner products, that will be important for our purpose.

In a general setting, a semi-inner product generating the norm is not necessarily unique, which again contrasts with the case of an inner-product. However, the uniqueness of the semi-inner product can be easily characterized geometrically. Let us recall that a normed space $X$ is \emph{smooth} if for each nonzero vector $x \in X$ there is a unique \emph{supporting functional} $\varphi_x$ at $x$, that is, a linear functional satisfying the conditions $||\varphi_x||=1$ and $\varphi_x(x)=||x||$. It turns out that the uniqueness of a semi-inner product on $X$ is equivalent to the smoothness of $X$. Moreover, in this case, the unique semi-inner product on $X$ can be expressed by the formula
\begin{align}\label{sip-pattern-in-smooth-sp}
\sip{x}{y}=\|y\|\!\cdot\!\varphi_y(x),\quad x,y\in X,
\end{align}
where $\varphi_0$ is understood to be the zero functional. Therefore, we can say, that in the case of a smooth Banach space $X$, the semi-inner product is a certain canonical mapping associated to $X$, through which, a lot of important properties of $X$ can be expressed. For this reason, in this paper we shall focus our attention only on smooth Banach spaces, as this makes our investigation much clearer.

If the norm of $X$ is generated by an inner product $\ip{\cdot}{\cdot}$, we have the standard relation of orthogonality: $x\bot y \Leftrightarrow \ip{x}{y}=0$. In the case of a smooth Banach space $X$, the {\it orthogonality} can be defined in a similar way: $x\bot y \Leftrightarrow \sip{y}{x}=0$ (the reverse of order is important here). Since we consider the smooth space $X$, this notion of orthogonality turns out to coincide with the well-known relation of orthogonality introduced by Birkhoff in \cite{birkhoff} (see also \cite{james}). In other words
\begin{equation}\label{b-ort-sip}
x\bot y\ \ \Leftrightarrow \ \ \forall_{\alpha\in\begin{footnotesize} \K
\end{footnotesize}}\ \|x\|\leq\|x+\alpha y\|.
\end{equation}
As we shall consider only the semi-inner products, from this point we will use the symbol $\perp$ only for the relation orthogonality described above. To avoid confusion with the inner-product case, it should be kept in mind that this relation of orthogonality is not symmetric (in general).
Moreover, from (\ref{b-ort-sip}) it follows directly that:
\begin{align}\label{property-ort-dist}
{\rm if}\ Y\subseteq X\ {\rm is\ subspace\ and}\ a\bot Y, {\rm then}\ {\rm dist}(a,Y)=\|a\| .   
\end{align}
For a set $A \subseteq X$, we define the {\it orthogonal complement} of $A$ as 
$$A^{\perp} = \{ x \in X \ : \ x \perp a \text{ for every } a \in A\}.$$
If $(X, \|\!\cdot\!\|)$ is a reflexive Banach space, then for a
closed linear subspace $Y \subseteq X$, we have the {\it orthogonal decomposition} (see \cite[p. 163, 164]{dragomir-sip}):
\begin{align}\label{theorem-b-decomposition}
X=Y + Y^{\bot}
\end{align}
Moreover, if $X$ is additionally a stricly convex space, then every vector $x \in X$ has a unique representation of the form $x=y+z$, where $y \in Y$ and $z \in Y^{\bot}$. 

Here, we should strongly emphasis, that if $Y \subseteq X$ is a linear subspace, then generally $Y^{\perp}$ is not a linear subspace of $X$. Among many of interesting connections between the semi-inner product and some other notions of the Banach space theory, one relation of a particular importance is between the linearity of $Y^{\perp}$ and linear projections of norm one. If $Y$ is a linear subspace of a normed space $X$, then we say that $Y$ is \emph{$1$-complemented} in $X$, if there exists a linear projection $P: X \to Y$ of norm $1$. Projections of norm $1$ can be considered as a natural generalization of the orthogonal projections in an inner-product space to the setting of a general normed space. There is an extensive body of research devoted to studying $1$-complemented subspaces of normed spaces, both in general and specific settings. Linearity of the orthogonal complement is closely related to $1$-complemented subspaces. To illustrate this, let us consider a smooth and reflexive normed space $X$ and its subspace $Y$. If $Y^{\perp}$ is linear subspace, then representation of any vector $x \in X$ in a form $x=y+z$, where $y \in Y$, $z \in Y^{\perp}$ is unique. This representation allows us to define a linear projection $P:X \to X$ by $P(y+z)=z.$ Then we have $P(X)=Y^{\perp}$, $\ker P = Y$ and from the orthogonality it follows immediately that $\|P\|=1$. In particular,  $Y^{\perp}$ is $1$-complemented in $X$. Therefore, if $Y^{\perp}$ is a linear subspace, then it is automatically $1$-complemented in $X$. However, it is well-known that $1$-complemented subspaces are actually very rare, with the exception of one-dimensional subspaces, which are always $1$-complemented by the Hahn-Banach Theorem. From a result of Bosznay and Garay (see \cite{bosznay}) it follows, that if we consider the set of $n$-dimensional normed spaces endowed with some natural topology, then the majority of $n$-dimensional normed spaces does not have any $1$-complemented subspaces $Y$ of dimension $2 \leq \dim Y \leq n-1$. Moreover, if $X$ is a finite-dimensional normed space with $\dim X\geq 3$, then $Y^{\perp}$ is a linear subspace for all linear subspaces $Y\subseteq X$ if and only if, all linear subspaces $Y \subseteq X$ are $1$-complemented, if and only if $X$ is an inner-product space. For a proof of this famous result see for example \cite{philips}.  

We move to the main topic of the paper, that is to mappings preserving the semi-inner product. One of the fundamental properties of an inner product is the fact, that if $X$ is an inner product space and $f\colon X \to X$ is a mapping that preserves the inner product, then $f$ has to be a linear isometry. This classical result has a very short proof. Indeed, if we assume that $\ip{f(x)}{f(y)}=\ip{x}{y}$ for all $x, y \in X$, then for fixed vectors $x, y \in X$ and scalars $a, b \in \mathbb{K}$ we can consider the expression
$$||f \left ( ax + by \right ) - af(x) - bf(y) ||^2.$$

By using the linearity/conjugate-linearity of the inner product, we can rewrite this expression in the simplest terms of the form $\ip{f( \cdot)} {f( \cdot)}$. By using the fact that $f$ preserves the inner product, we can get rid of all $f$'s, and then again use the properties of inner product to obtain $0$. This shows that $f$ has to be linear. It is now immediate that $f$ has to be an isometry, as $f$ preserves also the norm of the vector; namely, we get $||f(x)||^2 = \ip{f(x)}{f(x)} = \ip{x}{x} = ||x||^2$.

The main objective of the paper is to study the analogous question for the semi-inner product. We say that a smooth normed space $X$ has \textit{property (SL)} if every mapping ${f\colon X\to X}$ satisfying the condition 
$$\sip{f(x)}{f(y)}=\sip{x}{y} \text { for } x, y \in X,$$ 
is linear. By the same reasoning as above, every mapping that preserves the semi-inner product preserves also the norm of the vector. Therefore, any linear mapping preserving the semi-inner product is automatically a linear isometry.

As we have already explained, every inner-product space has the property (SL). However, it is quite visible that proof of this fact is heavily based on the biadditivity of the inner-product. For this reason, the same argument can not be expected to work in the semi-inner product case. Ili\v{s}evi\'{c} and Turn\v{s}ek in \cite{ilisevic} have made an attempt to prove that every smooth normed space has the property (SL) (see \cite[Proposition 2.4]{ilisevic}. Unfortunately, their proof cointains a mistake, that was already pointed out in \cite{wojcik}. Perhaps surprisingly, it turns out that not only the proof of \cite[Proposition 2.4]{ilisevic} was wrong, but actually also the statement is not true. In the same paper \cite{wojcik} some smooth renorming of the space $\ell_p$ (with $1 < p < \infty$) was established, for which the resulting space does not have the property (SL). In other words, there exist an equivalent smooth norm $\|\!\cdot]\!\|$ on $\ell_p$ and a non-linear map $f\colon \big(\ell_p,\|\!\cdot \!\|\big) \to \big(\ell_p,\|\!\cdot \!\|\big)$, which preserves the semi-inner product. It should be noted, that proof of Ili\v{s}evi\'{c} and Turn\v{s}ek remains correct in the case, where $f\colon X \to X$ preserves the semi-inner product and is additionally a surjective mapping. Moreover, it was established in \cite{wojcik} that every smooth finite-dimensional normed space has the property (SL).

Already these first results demonstrate that property (SL) is highly connected to the geometry of the underlying space, but this relation is far from being obvious. The main goal of our paper is to explore this connection, by establishing several new results concerning the property (SL). First of all, we provide new simple and explicit construction of a smooth renorming of the space $\ell_p$, for which there is a non-linear mapping preserving the semi-inner product. Moreover, this equivalent norm is additionally also strictly convex, while in the previous construction (given in \cite{wojcik}) the lack of the strict convexity was crucial. Our result goes as follows.

\begin{theorem}
\label{twkonstrukcja}
Let $p\in (1,\infty)$. Let $X$ be any three-dimensional smooth and strictly convex normed space, which is not an inner-product space. Then, there exists a one-dimensional subspace $Y \subseteq X$, such that the space
$$\Big(W\oplus_p W \oplus_p\ldots\Big)\oplus_p\Big(X\oplus_p X\oplus_p\ldots\Big)$$
does not have the property (SL), where $W:= X/Y$ is the quotient space.
\end{theorem}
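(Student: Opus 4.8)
The plan is to produce an explicit non-linear map on
$Z:=\big(W\oplus_p W\oplus_p\cdots\big)\oplus_p\big(X\oplus_p X\oplus_p\cdots\big)$
preserving the semi-inner product, assembled from a suitable embedding of $W$ back into $X$ together with a coordinate shift. First I would record that $Z$ is smooth: it is an $\ell_p$-sum ($1<p<\infty$) of the smooth spaces $X$ and $W=X/Y$, the latter being smooth because $W^*$ is isometric to the subspace $Y^{\circ}\subseteq X^*$, and $X^*$ is strictly convex since $X$ is smooth. Hence the semi-inner product on $Z$ is unique and is given by the $\ell_p$-formula
\begin{align}\label{plan-lp-sip}
\sip{u}{v}_Z=\|v\|_Z^{\,2-p}\sum_i\|v_i\|^{p-2}\sip{u_i}{v_i},
\end{align}
summed over all coordinates (both the $W$- and $X$-slots), which I would obtain by differentiating the norm or cite from the standard theory.

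The conceptual heart is the following claim: \emph{for a one-dimensional $Y\subseteq X$, the quotient map $q\colon X\to W$ restricts to a bijective, isometric, semi-inner-product preserving map $\psi:=q|_{Y^{\perp}}\colon Y^{\perp}\to W$.} Since $X$ is finite-dimensional (hence reflexive), smooth and strictly convex, \eqref{theorem-b-decomposition} provides the unique decomposition $X=Y+Y^{\perp}$, and a short uniqueness argument shows $\psi$ is a bijection. For $z\in Y^{\perp}$ one has $z\perp Y$, so \eqref{property-ort-dist} yields $\|q(z)\|_W=\dist(z,Y)=\|z\|$, i.e. $\psi$ is isometric. Moreover $z\perp Y$ forces $\varphi_z\in Y^{\circ}=W^*$, and the induced norm-one functional $\tilde\varphi_z\in W^*$ attains $\|q(z)\|_W$ at $q(z)$; by smoothness of $W$ it must be the supporting functional $\varphi^{W}_{q(z)}$. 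Substituting into \eqref{sip-pattern-in-smooth-sp} gives $\sip{q(z')}{q(z)}_W=\|z\|\,\varphi_z(z')=\sip{z'}{z}$ for all $z,z'\in Y^{\perp}$, proving the claim. Consequently the inverse $\iota:=\psi^{-1}\colon W\to X$ is an isometric, semi-inner-product preserving map with image $Y^{\perp}$ and $\iota(0)=0$.

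Granting the claim, I would define $f\colon Z\to Z$ by the shift
$$f\big((w_1,w_2,\dots),(x_1,x_2,\dots)\big)=\big((w_2,w_3,\dots),(\iota(w_1),x_1,x_2,\dots)\big),$$
which moves the $W$-coordinates one step to the left and the $X$-coordinates one step to the right, inserting the embedded vector $\iota(w_1)$ into the vacated first $X$-slot. Because $\iota$ is isometric, $f$ maps $Z$ into $Z$ and is norm-preserving; a direct computation with \eqref{plan-lp-sip}, using $\sip{\iota(w_1)}{\iota(w_1')}=\sip{w_1}{w_1'}$, shows that the inserted term contributes exactly what the dropped $w_1$-term did, so that $\sip{f(u)}{f(v)}_Z=\sip{u}{v}_Z$ for all $u,v\in Z$. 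Thus $f$ preserves the semi-inner product.

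It remains to guarantee that $f$ is non-linear, equivalently that $\iota$ is non-linear, equivalently that $Y^{\perp}$ is not a linear subspace for a suitable one-dimensional $Y$; this is where the hypothesis that $X$ is not an inner product space is essential, and I expect it to be the main obstacle. I would establish the existence of such $Y$ by contradiction: if $Y^{\perp}$ were linear for every one-dimensional $Y=\lin\{y_0\}$, then the construction following \eqref{theorem-b-decomposition} furnishes a norm-one projection $P$ with $\ker P=Y$ and range $Y^{\perp}$; passing to the adjoint $P^*$ makes each two-dimensional subspace $y_0^{\circ}\subseteq X^*$ the range of a norm-one projection. As $y_0$ ranges over all directions, these exhaust all two-dimensional subspaces of the three-dimensional space $X^*$, so by Kakutani's characterization (the finite-dimensional result cited via \cite{philips}) $X^*$, and hence $X$, would be an inner product space, contrary to hypothesis. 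Therefore some one-dimensional $Y$ has $Y^{\perp}$ non-linear; for this $Y$ the map $\iota$, whose image is $Y^{\perp}$, cannot be linear, so $f$ is a non-linear semi-inner-product preserving self-map of $Z$, and $Z$ fails to have the property (SL).
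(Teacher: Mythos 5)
Your proof is correct and follows essentially the same route as the paper: you select a one-dimensional $Y$ with $Y^{\perp}$ non-linear (your dual projection/Kakutani argument makes precise the paper's terse parenthetical), build the non-linear semi-inner-product preserving map $\iota\colon W\to X$ as the inverse of the quotient map restricted to $Y^{\perp}$ (which is exactly the strictly convex, bijective special case of the paper's Lemmas \ref{sipquotient} and \ref{lemquotient}), and finish with the identical shift operator on the $\ell_p$-sum. No gaps.
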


A natural question arises: does the space $\ell_p$, endowed with the standard norm $\|\!\cdot\!\|$, has the property (SL)? In order to answer the question, we give a characterization of the property (SL) in terms of subspaces of quotient spaces. Our results works for all smooth and reflexive Banach spaces and gives an equivalent condition to property (SL), but without referring to the semi-inner product on $X$. If $Y \subseteq X$ is a closed subspace, then by $k\colon X \to X/Y $ we denote the canonical surjection $k(\cdot):=[\cdot]$. By a proper closed
subspace $Y \subseteq X$ we mean a closed subspace such that $\{0\}\neq Y \neq X$.
\begin{theorem}
\label{twchar}
Let $X$ be a reflexive smooth Banach space. Then, $X$ has the property (SL) if and only if the following condition is satisfied: for every proper closed subspace $Y \subseteq X$ and for every a closed subspace $V\subseteq X/Y$ such that there is a linear surjective isometry $T\colon X \to V$, the set $k^{-1}(V) \cap Y^{\perp}$ is a linear subspace of $X$.
\end{theorem}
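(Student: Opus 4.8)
The plan is to prove both implications by translating between a semi-inner-product-preserving map $f\colon X\to X$ and a section of the canonical surjection $k\colon X\to X/Y$. Throughout I would lean on three elementary facts. First, for every closed subspace $Y$ one has $Y\cap Y^{\perp}=\{0\}$: if $w\in Y\cap Y^{\perp}$ then $w\perp w$, so $\sip{w}{w}=\|w\|^2=0$ and $w=0$. Second, combining the orthogonal decomposition (\ref{theorem-b-decomposition}) with (\ref{property-ort-dist}) shows that $k|_{Y^{\perp}}\colon Y^{\perp}\to X/Y$ is a norm-preserving surjection; in particular, over every $v\in X/Y$ there sits a point of $Y^{\perp}$. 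Third, a surjective linear isometry between smooth spaces preserves the semi-inner product (the ``surjective case''), which one reads off from (\ref{sip-pattern-in-smooth-sp}) by noting that such an isometry carries supporting functionals to supporting functionals.

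The technical core is a compatibility lemma: if $z,z'\in Y^{\perp}$ and $k(z),k(z')\in V$, then $\sip{z}{z'}=\sip{k(z)}{k(z')}$, the right-hand side being computed in $V$ (which is smooth, since $V\cong X$). To prove it I would observe that $z'\in Y^{\perp}$ forces $\varphi_{z'}$ to vanish on $Y$, hence to descend to a norm-one functional $\psi$ on $X/Y$ supporting $k(z')$ (here $\|k(z')\|=\|z'\|$ by (\ref{property-ort-dist})); restricting $\psi$ to $V$ and invoking smoothness of $V$ identifies $\psi|_V$ with the supporting functional of $k(z')$ in $V$, and (\ref{sip-pattern-in-smooth-sp}) then yields the equality. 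The payoff is that for any linear surjective isometry $T\colon X\to V$ and any section $L\colon V\to Y^{\perp}$ of $k$ (that is, $k\circ L=\mathrm{id}_V$), the map $f:=L\circ T$ preserves the semi-inner product, since $\sip{f(x)}{f(x')}=\sip{L(Tx)}{L(Tx')}=\sip{Tx}{Tx'}=\sip{x}{x'}$.

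For one direction, suppose (SL) fails, witnessed by a non-linear $f$ with $\sip{f(x)}{f(y)}=\sip{x}{y}$. I would set $Y:=\bigcap_{v\in f(X)\setminus\{0\}}\ker\varphi_v$, a closed subspace. Expanding $\sip{f(x+x')-f(x)-f(x')}{f(y)}=0$ and $\sip{f(\lambda x)-\lambda f(x)}{f(y)}=0$ (both immediate from (sip1) and the hypothesis on $f$) shows, via $\sip{w}{f(y)}=\|f(y)\|\varphi_{f(y)}(w)$, that the two defects lie in $Y$; hence $k\circ f$ is linear. Since every $f(x)$ lies in $Y^{\perp}$ (its supporting functional is one of those defining $Y$), the map $T:=k\circ f$ is a linear isometry with closed range $V:=k(f(X))$, and $Y$ is proper ($Y\neq X$ because some $\varphi_v\neq0$; $Y\neq\{0\}$ because otherwise $T=f$ would be linear). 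It remains to see that $S:=k^{-1}(V)\cap Y^{\perp}$ is not a subspace. If it were, then for $z_1,z_2\in S$ with $k(z_1)=k(z_2)$ we would get $z_1-z_2\in S\cap Y\subseteq Y^{\perp}\cap Y=\{0\}$, so $k|_S$ would be injective; as $f(X)\subseteq S$, the identities $k(f(x+x'))=k(f(x)+f(x'))$ and $k(f(\lambda x))=k(\lambda f(x))$ would then force $f(x+x')=f(x)+f(x')$ and $f(\lambda x)=\lambda f(x)$, contradicting non-linearity.

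For the remaining direction, given a proper $Y$, a closed $V\subseteq X/Y$, a linear surjective isometry $T\colon X\to V$, and assuming $S:=k^{-1}(V)\cap Y^{\perp}$ is not a subspace, I would build a non-linear section $L$ and set $f:=L\circ T$; by the lemma $f$ preserves the semi-inner product, and it is non-linear because $T$ is a linear bijection onto $V$, so (SL) fails. To produce a non-linear section of the surjection $k|_S\colon S\to V$, I would argue by contraposition: assume every section is linear. Choosing, for $z\in S$, the section with value $z$ at $k(z)$, its linearity gives $\lambda z\in S$, so $S$ is closed under scalars; if some fibre over $v^{*}\neq0$ contained two points $a\neq b$, then $2a\neq2b$ would both lie in the fibre over $2v^{*}$, and the section $v^{*}\mapsto a$, $2v^{*}\mapsto 2b$ would fail to be homogeneous; hence $k|_S$ is injective, its unique section is linear, and $S=L(V)$ is a subspace, against the assumption. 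The hard part is the compatibility lemma together with the fact that, without strict convexity, $k|_{Y^{\perp}}$ need not be injective; the identity $Y\cap Y^{\perp}=\{0\}$ is precisely what tames this and makes the argument run for all reflexive smooth $X$.
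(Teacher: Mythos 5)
Your proposal is correct and follows essentially the same route as the paper: your $Y=\bigcap\ker\varphi_{f(x)}$ is the paper's $Y=\{y:\sip{y}{f(w)}=0\ \forall w\}$, your compatibility lemma is the paper's Lemma \ref{sipquotient}, your non-linear section argument is the paper's Lemma \ref{lemquotient} (phrased by contraposition instead of the two-case $f_1,f_2$ construction), and the remaining steps ($k\circ f$ linear isometric, $Y\cap Y^{\perp}=\{0\}$ forcing the defect to vanish) coincide with the paper's. No gaps.
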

It is well-known that if $Y \subseteq \ell_p$ is a closed linear subspace such that $X/Y$ is isometric to $\ell_p$, then $Y^{\perp}$ is a linear subspace (see \cite{pelczynski} for the equivalent dual result). However, to prove that the space $\ell_p$ has the (SL) property, we need to take into the account not only $X/Y$ but all the possible subspaces of $X/Y$ that could be isometric to $\ell_p$.
It turns out that $\ell_p$ indeed has the (SL) property.
\begin{theorem}
\label{twlp}
For every $1 < p < \infty$ the space $\ell_p$ has the property (SL).
\end{theorem}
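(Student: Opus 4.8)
The plan is to apply the characterization of Theorem \ref{twchar}. Since for $1<p<\infty$ the space $\ell_p$ is reflexive, smooth and strictly convex, it suffices to fix a proper closed subspace $Y\subseteq\ell_p$ and a closed subspace $V\subseteq\ell_p/Y$ admitting a linear surjective isometry $T\colon\ell_p\to V$, and to prove that $N:=k^{-1}(V)\cap Y^{\perp}$ is a linear subspace. The case $p=2$ is immediate, as $\ell_2$ is a Hilbert space and hence has the property (SL); so from now on I assume $p\neq2$. Writing $U:=k^{-1}(V)$, the decomposition (\ref{theorem-b-decomposition}) together with strict convexity yields a (generally non-linear) bijection $\sigma\colon\ell_p/Y\to Y^{\perp}$ inverse to $k|_{Y^{\perp}}$, which by (\ref{property-ort-dist}) satisfies $\|\sigma(w)\|=\|w\|_{\ell_p/Y}$ and $\sigma(0)=0$; one checks directly that $N=\sigma(V)$. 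As $Y^{\perp}$ is closed under scalar multiplication and $U$ is linear, $N$ is already closed under scalar multiplication, so the whole difficulty is additivity.

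To control additivity I pass to the dual description of orthogonality. Identifying $\ell_p^{*}$ with $\ell_q$, where $1/p+1/q=1$, the relation $x\perp Y$ means exactly that the supporting functional $\varphi_x$ annihilates $Y$, so that
\[
Y^{\perp}=\{x\in\ell_p:\varphi_x\in Y^{0}\},\qquad Y^{0}:=\{\psi\in\ell_q:\psi|_Y=0\}.
\]
In the known case $V=\ell_p/Y\cong\ell_p$ one has $Y^{0}=(\ell_p/Y)^{*}\cong\ell_q$ isometrically, and the Banach--Lamperti rigidity of $\ell_q$ (here $q\neq2$) forces $Y^0$ to be spanned by a disjointly supported sequence; since $x\mapsto\varphi_x$ acts coordinatewise by a power, this makes $Y^{\perp}=\{x:\varphi_x\in Y^0\}$ a block subspace, hence linear. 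For a genuine subspace $V$ I mimic this on the lifted basis. Set $v_n:=Te_n$ and $z_n:=\sigma(v_n)\in N$; each $z_n$ is a unit vector with $k(z_n)=v_n$ and $z_n\perp Y$, so $\varphi_{z_n}\in Y^0$. Since $T$ is an isometry, the supporting functional of $v_n$ in $V$ is the $n$-th biorthogonal functional, and comparing it with $\varphi_{z_n}$ (viewed in $(\ell_p/Y)^{*}=Y^0$) gives the biorthogonality relations $\varphi_{z_n}(z_m)=\delta_{nm}$ together with $\|\sum_m a_m v_m\|_{\ell_p/Y}=(\sum_m|a_m|^p)^{1/p}$.

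The heart of the proof, and the step I expect to be the main obstacle, is to upgrade this to the statement that the unit vectors $z_n$ span an isometric copy of $\ell_p$ inside $\ell_p$, that is $\|\sum_n a_n z_n\|_{\ell_p}=(\sum_n|a_n|^p)^{1/p}$ for all $(a_n)$; equivalently, by the Banach--Lamperti description of isometric $\ell_p$-copies for $p\neq2$, that $\{z_n\}$ is \emph{disjointly supported}. We always have $\|\sum_n a_n z_n\|_{\ell_p}\geq\dist(\sum_n a_n z_n,Y)=(\sum_n|a_n|^p)^{1/p}$, so the content is the reverse inequality. This is exactly where passing from the whole quotient to a subspace $V$ of it is delicate: the biorthogonality $\varphi_{z_n}(z_m)=\delta_{nm}$ (mutual orthogonality of the $z_n$) by itself does not force disjointness once three or more vectors are involved, so one must feed in the full isometric structure $\|\sum_m a_m v_m\|_{\ell_p/Y}=(\sum_m|a_m|^p)^{1/p}$ and again invoke the rigidity of the $\ell_p$-norm for $p\neq2$ to conclude that the lifts $z_n$ sit on pairwise disjoint sets of coordinates. (That disjointness is genuinely necessary is clear: if $N$ were linear, then $\sigma|_V$ would be a linear isometry by the Mazur--Ulam theorem, and Banach--Lamperti rigidity would force the $z_n$ to be disjoint anyway.) Once disjointness is in hand the conclusion is immediate: for any $(a_n)\in\ell_p$ the vector $x=\sum_n a_n z_n$ has $\varphi_x$ supported on $\bigcup_n\operatorname{supp}z_n$ and equal on each block to a scalar multiple of $\varphi_{z_n}$, whence $\varphi_x\in\overline{\lin}\{\varphi_{z_n}\}\subseteq Y^0$ and thus $x\in Y^{\perp}$; since also $k(x)=\sum_n a_n v_n\in V$, we obtain $\overline{\lin}\{z_n\}\subseteq N$. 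Conversely every element of $N=\sigma(V)$ is the unique $Y^{\perp}$-lift of some $\sum_n a_n v_n$, which by the previous line equals $\sum_n a_n z_n$; hence $N=\overline{\lin}\{z_n\}$ is a closed linear subspace. By Theorem \ref{twchar} this proves that $\ell_p$ has the property (SL).
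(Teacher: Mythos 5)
There is a genuine gap, and it sits exactly at the point you yourself flag as ``the heart of the proof.'' Your overall skeleton coincides with the paper's: reduce via Theorem \ref{twchar}, lift the basis $v_n=Te_n$ to unit vectors $z_n\in Y^{\perp}$, and aim at $N=\overline{\lin\{z_n : n\geq 1\}}$. But your argument for the crucial inclusion $\overline{\lin\{z_n : n\geq 1\}}\subseteq Y^{\perp}$ is circular. You want to deduce that the $z_n$ are disjointly supported from the Banach--Lamperti rigidity of isometric copies of $\ell_p$ ($p\neq 2$); that theorem, however, takes as its hypothesis that $z_n\mapsto e_n$ extends to a linear isometry of $\overline{\lin\{z_n : n\geq 1\}}$ onto $\ell_p$, i.e.\ that $\big\|\sum_n a_nz_n\big\|_{\ell_p}=\big(\sum_n|a_n|^p\big)^{1/p}$. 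As you note, only the inequality $\geq$ is available from the quotient norm, and the reverse inequality is \emph{equivalent} to the assertion that $\sum_n a_nz_n$ realizes its distance to $Y$, i.e.\ that $\sum_n a_nz_n\in Y^{\perp}$ --- which is precisely the claim to be proved. Biorthogonality $\varphi_{z_n}(z_m)=\delta_{nm}$ together with unit norms and the lower $\ell_p$-estimate does not force disjoint supports, so ``invoking rigidity'' at this stage assumes the conclusion. Your final paragraph (disjointness implies $\varphi_x\in Y^{0}$ implies linearity of $N$) is sound, but it rests on this unproved premise; your a posteriori remark via Mazur--Ulam only confirms that disjointness would follow \emph{from} linearity of $N$, not the other way around.

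The paper closes exactly this gap with a concrete two-vector argument that avoids disjointness of supports altogether. For disjoint finite index sets $A,B$ and $x=\sum_{i\in A}a_iz_i$, $y=\sum_{i\in B}b_iz_i$ with $x,y\in Y^{\perp}$, the fact that the $[z_i]$ span an isometric copy of $\ell_p$ in the quotient yields the exact identity $\dist(x+y,Y)^p+\dist(x-y,Y)^p=2\big(\|x\|^p+\|y\|^p\big)$. Letting $m_1,m_2$ be the best approximations from $Y$ to $x+y$ and $x-y$, Hanner's inequalities (used in the direction appropriate to $p\geq 2$ or $p\leq 2$) combined with strict convexity force $m_1=m_2=0$, hence $x+y\in Y^{\perp}$; this is the implication \eqref{implikacja}. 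Iterating it over disjoint blocks, and using closedness of $k^{-1}(V)\cap Y^{\perp}$ for the closure, gives $\overline{\lin\{z_n : n\geq 1\}}\subseteq Y^{\perp}$, after which the reverse inclusion follows from uniqueness of the decomposition \eqref{theorem-b-decomposition} in a strictly convex space, as in your last lines. If you replace your rigidity appeal by this Hanner-type argument (or supply an independent proof of the upper estimate $\big\|\sum_n a_nz_n\big\|\leq\big(\sum_n|a_n|^p\big)^{1/p}$), your proof is complete.
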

In other words, if $f: \ell_p \to \ell_p$ preserves the semi-inner product, then $f$ is a linear isometry. At the same time, by Theorem \ref{twkonstrukcja} there is an equivalent norm
$\|\!\cdot\!\|_o$ on $\ell_p$ such that some non-linear mapping $f\colon \big(\ell_p,\|\!\cdot\!\|_o\big)\to \big(\ell_p,\|\!\cdot\!\|_o\big)$ preserving the semi-inner product can be found. Therefore, one could ask, if there exists an inifnite-dimensional Banach space with the property (SL) for any possible renorming. Using Theorem \ref{twchar} we prove that the dual of a variant of the Gowers-Maurey space, introduced by Ferenczi in \cite{ferencziconvex}, has such a property.

\begin{theorem}
\label{twgm}
There exists an infinite-dimensional uniformly smooth Banach space $X$ such that every smooth Banach space isomorphic to $X$ has the property (SL).
\end{theorem}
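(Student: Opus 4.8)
The plan is to realise $X$ as the dual of Ferenczi's uniformly convex variant of the Gowers--Maurey space and, for every smooth isomorph of $X$, to verify the abstract criterion of Theorem~\ref{twchar} by showing that its hypothesis is \emph{vacuous}. Concretely, let $G$ be Ferenczi's (complex) uniformly convex hereditarily indecomposable space and put $X := G^{*}$. Since $G$ is uniformly convex, $X$ is uniformly smooth, hence reflexive and smooth. Let $Z$ be any smooth Banach space isomorphic to $X$. Reflexivity is an isomorphic invariant, so $Z$ is reflexive and smooth, and Theorem~\ref{twchar} applies to it. Passing to duals, $Z^{*}\cong X^{*}=G^{**}=G$ (the last equality by reflexivity), so $Z^{*}$ is hereditarily indecomposable, because being HI is an isomorphic invariant. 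The external fact I will use is the Gowers--Maurey/Ferenczi structure theorem: \emph{for an HI space $E$ and a closed subspace $U\subseteq E$, every bounded operator $U\to E$ has the form $\lambda i_{U}+S$, where $i_{U}\colon U\hookrightarrow E$ is the inclusion, $\lambda$ is a scalar and $S$ is strictly singular.} Its standard consequence is that an HI space does not embed isomorphically into any of its proper closed subspaces.

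The core step is to prove that the hypothesis of Theorem~\ref{twchar} is never met for $Z$: there is \emph{no} proper closed subspace $Y\subseteq Z$ together with a closed subspace $V\subseteq Z/Y$ admitting a surjective linear isometry $T\colon Z\to V$. Suppose, for contradiction, that such $Y$, $V$ and $T$ exist. Dualising the isometric embedding $V\hookrightarrow Z/Y$ yields a surjection $(Z/Y)^{*}\twoheadrightarrow V^{*}$ (the adjoint of a bounded-below map is onto). Using the canonical isometric identifications $(Z/Y)^{*}=Y^{\circ}$, where $Y^{\circ}:=\{\varphi\in Z^{*}:\varphi|_{Y}=0\}$ is the annihilator of $Y$ in $Z^{*}$, and $V^{*}\cong Z^{*}$ (as $T$ is a surjective isometry), we obtain a bounded surjection
\[
q\colon Y^{\circ}\longrightarrow Z^{*}.
\]
Because $Y$ is a proper closed subspace, in particular $Y\neq\{0\}$, the annihilator $Y^{\circ}$ is a \emph{proper} closed subspace of $Z^{*}$.

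It remains to rule out such a surjection. Apply the structure theorem to $q$, regarded as an operator from the subspace $Y^{\circ}\subseteq Z^{*}$ into $Z^{*}$: we may write $q=\lambda i+S$ with $i\colon Y^{\circ}\hookrightarrow Z^{*}$ the inclusion and $S$ strictly singular. A strictly singular operator is never a surjection onto an infinite-dimensional space, so $\lambda\neq0$; thus $q$ is a strictly singular perturbation of the bounded-below embedding $\lambda i$, whence $q$ is upper semi-Fredholm, with closed range and finite-dimensional kernel $K$. As $K$ is finite-dimensional it is complemented, say $Y^{\circ}=K\oplus U'$, and then $q|_{U'}\colon U'\to Z^{*}$ is injective with $q(U')=q(Y^{\circ})=Z^{*}$, hence an isomorphism onto $Z^{*}$. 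But $U'\subseteq Y^{\circ}\subsetneq Z^{*}$ is then a proper closed subspace of $Z^{*}$ isomorphic to $Z^{*}$, contradicting the fact that the HI space $Z^{*}$ does not embed into its proper subspaces. Hence no admissible triple $(Y,V,T)$ exists.

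Consequently the condition in Theorem~\ref{twchar} is satisfied vacuously for $Z$, so $Z$ has property (SL); since $Z$ was an arbitrary smooth Banach space isomorphic to $X$, this proves the theorem. The main obstacle is not the functional-analytic bookkeeping above---duality for reflexive spaces and the Fredholm behaviour of strictly singular perturbations are routine---but rather securing the structural input: one must check that Ferenczi's construction indeed produces a uniformly convex HI space for which the subspace version of the scalar-plus-strictly-singular theorem holds (this is where the complex scalar field matters, since over $\R$ the operator algebra of an HI space may be larger), and that passing to the dual $X=G^{*}$ preserves uniform smoothness. Once these facts are in place, the argument is purely formal.
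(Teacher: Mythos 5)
Your choice of space and overall strategy coincide with the paper's: take $X=G^{*}$ for Ferenczi's uniformly convex HI space $G$, apply Theorem~\ref{twchar} to an arbitrary smooth isomorph $Z$, and dualize the hypothetical isometry $T\colon Z\to V\subseteq Z/Y$ to obtain a bounded surjection from the proper closed subspace $Y^{\circ}\subseteq Z^{*}$ onto $Z^{*}$, with $Z^{*}$ HI. Up to this point your argument matches the paper's proof step for step. The divergence is in how this surjection is ruled out: the paper simply cites Ferenczi's result (Proposition 3 of \cite{ferencziquotient}) that an HI space cannot be isomorphic to a proper quotient of a subspace of itself, whereas you attempt to derive this from the scalar-plus-strictly-singular structure theorem for operators defined on subspaces.

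That derivation contains a genuine gap. To exclude $\lambda=0$ you assert that ``a strictly singular operator is never a surjection onto an infinite-dimensional space.'' This is false in general: the canonical quotient map $q\colon\ell_{1}\to\ell_{2}$ is surjective, yet it is strictly singular, since every infinite-dimensional closed subspace of $\ell_{1}$ contains an isomorphic copy of $\ell_{1}$, which does not embed into $\ell_{2}$; hence no restriction of $q$ to an infinite-dimensional subspace is an isomorphism onto its image. So the case $q=S$ strictly singular and surjective is not eliminated by your argument, and eliminating it is not a routine Fredholm exercise --- ruling out strictly singular (equivalently, non-semi-Fredholm) surjections from subspaces onto an HI space is essentially the content of Ferenczi's quotient-of-subspace theorem, which appeared well after the Gowers--Maurey operator results precisely because quotients require a separate (co-singularity/duality) analysis. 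Your Fredholm argument in the case $\lambda\neq 0$ is fine, and the rest of the bookkeeping (reflexivity, $(Z/Y)^{*}=Y^{\circ}$, $V^{*}\cong Z^{*}$, properness of $Y^{\circ}$) is correct; the proof is repaired simply by replacing the strictly-singular claim with a citation of \cite{ferencziquotient}, Proposition~3, as the paper does. Your caveat about real versus complex scalars is reasonable but becomes moot once that result is invoked as a black box.
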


The paper is organized as follows. In Section \ref{sectionquotient} we establish some auxiliary results, in which we develop certain relations between the semi-inner product on $X$ and the quotient spaces of $X$. In Section \ref{sectionkonstrukcja} we prove Theorem \ref{twkonstrukcja}, that is, we give an example of an infinite $\ell_p$ sum of Banach spaces, without the property (SL). In Section \ref{sectionchar} we prove Theorem \ref{twchar}, which characterizes the property (SL). Its consequences are presented in Section \ref{sectionconsequences}. Moreover, it is worth mentioning that Theorem \ref{twchar} has an interesting connection to an old open problem of the geometry of Banach spaces, proposed by Faulkner and Huneycutt in 1978 (\cite{faulkner}). We refer to Remark \ref{remarkproblem} for the details.

\section{Semi-inner product and the quotient space}
\label{sectionquotient}
\setcounter{equation}{0}
In order to construct a smooth and strictly convex Banach space without the property (SL), we need some auxiliary results. First, we find a formula for the semi-inner product on a quotient space. For a given closed linear subspace $Y \subseteq X$ the canonical surjection $k:X \to X/Y$ will be denoted by $k(x):=[x]$.
\begin{lemma}
\label{sipquotient}
Let $X$ be a reflexive smooth Banach space, and let $Y\subseteq X$ be a proper closed subspace. Then, the unique semi-inner product in the space $X/Y$ is given be the following formula
\begin{align}\label{sip-in-quotient-sp}
 \big[ [u]\, |\, [w]\big]=\sip{u_2}{w_2},
\end{align}
where
$$u=u_1 + u_2, \quad w = w_1 + w_2$$
are any decompositions such that $u_1, w_1 \in Y$ and $u_2, w_2 \in Y^{\perp}$.
\end{lemma}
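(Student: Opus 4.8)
The plan is to compute the right-hand side of \eqref{sip-in-quotient-sp} by applying the explicit formula \eqref{sip-pattern-in-smooth-sp} \emph{inside} $X/Y$, and to show that it collapses to $\sip{u_2}{w_2}$. The first step is to record that $X/Y$ is again a smooth space, so that \eqref{sip-pattern-in-smooth-sp} is available there and the semi-inner product on $X/Y$ is genuinely unique. Smoothness of $X/Y$ follows by duality: $(X/Y)^{*}$ is isometric to the annihilator of $Y$ in $X^{*}$, hence to a subspace of $X^{*}$; since $X$ is smooth and reflexive, $X^{*}$ is strictly convex, and strict convexity passes to subspaces, so $(X/Y)^{*}$ is strictly convex and therefore $X/Y$ is smooth. (Reflexivity, which is inherited by quotients, also guarantees that the decomposition \eqref{theorem-b-decomposition} is available, so the decompositions $u=u_1+u_2$, $w=w_1+w_2$ required in the statement actually exist.)

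The heart of the argument is the identification of the supporting functional of $X/Y$ at the coset $[w]$. Fix a decomposition $w=w_1+w_2$ with $w_1\in Y$ and $w_2\in Y^{\perp}$, and assume $[w]\neq 0$ (the degenerate case $[w]=0$ forces $w_2=0$ by \eqref{property-ort-dist}, and then both sides of \eqref{sip-in-quotient-sp} vanish). Since $w_2\perp Y$, the defining relation $x\perp y\Leftrightarrow\sip{y}{x}=0$ combined with \eqref{sip-pattern-in-smooth-sp} yields $\|w_2\|\cdot\varphi_{w_2}(y)=\sip{y}{w_2}=0$, hence $\varphi_{w_2}(y)=0$ for every $y\in Y$; that is, the supporting functional $\varphi_{w_2}$ annihilates $Y$. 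Consequently it descends to a functional $\tilde{\varphi}$ on $X/Y$ via $\tilde{\varphi}([x])=\varphi_{w_2}(x)$, and a routine computation with the quotient norm shows $\|\tilde{\varphi}\|_{(X/Y)^{*}}=\|\varphi_{w_2}\|_{X^{*}}=1$. Moreover, using $[w]=[w_2]$ and the norm identity $\|[w]\|=\dist(w_2,Y)=\|w_2\|$ from \eqref{property-ort-dist}, we obtain $\tilde{\varphi}([w])=\varphi_{w_2}(w_2)=\|w_2\|=\|[w]\|$. Thus $\tilde{\varphi}$ is a norm-one supporting functional of $X/Y$ at $[w]$, and by the smoothness of $X/Y$ it is \emph{the} supporting functional there.

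With this identification in hand the formula follows by direct substitution into \eqref{sip-pattern-in-smooth-sp} for $X/Y$:
\begin{align*}
\big[\,[u]\,|\,[w]\,\big]=\|[w]\|\cdot\tilde{\varphi}([u])=\|w_2\|\cdot\varphi_{w_2}(u)=\|w_2\|\cdot\varphi_{w_2}(u_2)=\sip{u_2}{w_2},
\end{align*}
where the third equality uses $\varphi_{w_2}(u_1)=0$ (as $u_1\in Y$) and the last is \eqref{sip-pattern-in-smooth-sp} applied in $X$. A pleasant feature of this route is that well-definedness comes for free: the left-hand side depends on neither the chosen coset representatives nor the particular orthogonal decompositions, so the displayed equality simultaneously certifies that $\sip{u_2}{w_2}$ is independent of all these choices, which is exactly the content of the phrase ``any decompositions'' in the statement. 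I expect the main obstacle to be the verification that $\tilde{\varphi}$ really is the supporting functional at $[w]$ --- that is, correctly tracking the passage between orthogonality in $X$, the vanishing of $\varphi_{w_2}$ on $Y$, and the behaviour of the quotient norm --- rather than the concluding substitution, which is routine.
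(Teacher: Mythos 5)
Your proof is correct and follows essentially the same route as the paper: both arguments hinge on identifying the supporting functional of $X/Y$ at $[w]$ with $\varphi_{w_2}$ and then substituting into \eqref{sip-pattern-in-smooth-sp}. The only difference is the direction of that identification --- the paper pulls $\varphi_{[w]}$ back to $X$ as $\varphi_{[w]}\circ k$ and uses smoothness of $X$ at $w_2$, whereas you push $\varphi_{w_2}$ down to the quotient and use smoothness of $X/Y$ at $[w]$; this is an immaterial variation, and your extra care with the degenerate case $[w]=0$ and with well-definedness is sound.
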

\begin{proof}
Since $X$ is smooth, the quotient space $X/Y$ is also smooth and the semi-inner product in $X/Y$ is unique. Let $u, w \in X$ be arbitrary vectors and let $u=u_1 + u_2$, $w=w_1+w_2$ be decompositions like in the statement of the lemma. From property (\ref{property-ort-dist}) it follows that
$$\big\|[w]\big\|=\dist(w, Y)=||w_2||.$$
Let us consider a supporting functional $\fii_{[w]}\in (X/Y)^*$.
Both equalities $[w]=[w_2]$ and $\big\| [w]\big\|=\|w_2\|$ yield
\begin{center}
$(\fii_{[w]}\circ k)(w_2)=\fii_{[w]}\big([w_2]\big)=\fii_{[w]}\big([w]\big)=\big\| [w]\big\|=\|w_2\|$.
\end{center}
Moreover
$$||\fii_{[w]}\circ k || \leq ||\fii_{[w]}|| \cdot ||k|| \leq 1.$$
It follows that $\fii_{[w]}\circ k\in X^*$ is a supporting functional for $w_2$, so $\fii_{[w]}\circ k=\fii_{w_2}$.
Thus, we obtain
\begin{align*}
\big[ [u]\, |\, [w]\big]&\stackrel{\eqref{sip-pattern-in-smooth-sp}}{=}\big\| [w]\big\|\cdot \fii_{[w]}\big([u] \big)=\|w_2\|\cdot (\fii_{[w]}\circ k)(u)\\
&=\|w_2\|\cdot \fii_{w_2}(u)\stackrel{\eqref{sip-pattern-in-smooth-sp}}{=}\sip{u}{w_2}=\sip{u_1+u_2}{w_2}\\
&=\sip{u_1}{w_2}+\sip{u_2}{w_2}=0+\sip{u_2}{w_2}
\end{align*}
and the proof is complete.
\end{proof}
The next lemma is fundamental for our constructions of non-linear mappings preserving the semi-inner product.
\begin{lemma}
\label{lemquotient}
Let $X$ be a reflexive smooth Banach space. Let $Y \subseteq X$ be a proper closed subspace and let $V \subseteq X/Y$ be a closed subspace such that the set $k^{-1}(V) \cap Y^{\perp}$ is not a linear subspace of $X$. Then, there exists a non-linear mapping $f\colon V \to k^{-1}(V) \cap Y^{\perp}$ satisfying
\begin{align}\label{map-k-1}
\ \big[ f([u])\, |\, f([w])\big]=\sip{u}{w} \: \text{ for all } \: u, v \in V.
\end{align}
\end{lemma}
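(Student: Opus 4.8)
The plan is to realize the desired map $f$ as a (set-theoretic) section of the canonical surjection restricted to $Y^{\perp}$, and then to exploit the non-linearity of $k^{-1}(V)\cap Y^{\perp}$ to force $f$ itself to be non-linear. Write $S:=k^{-1}(V)\cap Y^{\perp}$ and consider $\pi:=k|_{S}\colon S\to V$. First I would check that $\pi$ is onto: given $v\in V$, pick any $x\in X$ with $[x]=v$ and use the orthogonal decomposition (\ref{theorem-b-decomposition}) to write $x=x_1+x_2$ with $x_1\in Y$, $x_2\in Y^{\perp}$; then $[x_2]=[x]=v\in V$, so $x_2\in S$ and $\pi(x_2)=v$. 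Note also that the hypothesis $S\neq\{0\}$ forces $V\neq\{0\}$, since $V=\{0\}$ would give $S=Y\cap Y^{\perp}=\{0\}$, which is linear.

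The second step is the observation that \emph{every} section of $\pi$ already satisfies (\ref{map-k-1}), so that the only real task is to produce a non-linear one. Indeed, let $g\colon V\to S$ be any map with $k\circ g=\mathrm{id}_{V}$ (such a section exists because $\pi$ is surjective). For $v,v'\in V$ the vectors $g(v),g(v')$ lie in $Y^{\perp}$ and represent the cosets $v,v'$; hence, applying Lemma \ref{sipquotient} with the decompositions having trivial $Y$-component, we obtain
\begin{align*}
\sip{g(v)}{g(v')}=\sip{[g(v)]}{[g(v')]}=\sip{v}{v'}.
\end{align*}
Writing $v=[u]$, $v'=[w]$ with representatives $u,w\in Y^{\perp}$, the right-hand side is $\sip{u}{w}$ (again by Lemma \ref{sipquotient}), and the left-hand side is $\sip{g([u])}{g([w])}$, so this is precisely the identity (\ref{map-k-1}). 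Thus preservation of the semi-inner product is automatic for any section and does not depend on its linearity.

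It remains to exhibit a non-linear section, and this is where the hypothesis that $S$ is not a subspace enters; I expect this to be the only delicate point, precisely because $X$ need not be strictly convex and so $\pi$ need not be injective (otherwise one could simply take $f=\pi^{-1}$, whose image is the non-linear set $S$). I would start from an arbitrary section $g_{0}$. If $g_{0}$ is non-linear we are done. If $g_{0}$ is linear, then its image $g_{0}(V)$ is a linear subspace of $S$, and since $S$ itself is not linear we have $g_{0}(V)\subsetneq S$; choose $s\in S\setminus g_{0}(V)$ and put $v_{0}:=[s]$. Here $v_{0}\neq 0$ (as $[s]=0$ would give $s\in Y\cap Y^{\perp}=\{0\}\subseteq g_{0}(V)$) and $s\neq g_{0}(v_{0})$. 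Now define $f$ by $f(v)=g_{0}(v)$ for $v\neq v_{0}$ and $f(v_{0})=s$. This $f$ is again a section, hence satisfies (\ref{map-k-1}), and it is non-linear: it coincides with the linear map $g_{0}$ on $V\setminus\{v_{0}\}$ but differs from $g_{0}$ at the single point $v_{0}$, which is impossible for a linear map, since $v_{0}\neq 0$ forces $2v_{0}\in V\setminus\{v_{0}\}$ and then linearity would give $f(v_{0})=\tfrac12 f(2v_{0})=\tfrac12 g_{0}(2v_{0})=g_{0}(v_{0})\neq s$. Taking this $f$ completes the argument.
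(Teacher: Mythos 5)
Your proposal is correct and follows essentially the same route as the paper: both realize $f$ as a set-theoretic section of $k|_{S}$ where $S=k^{-1}(V)\cap Y^{\perp}$, observe via Lemma \ref{sipquotient} that any such section automatically preserves the semi-inner product, and use a one-point modification together with the fact that a linear map is determined off a single nonzero point to force non-linearity. The only difference is organizational: the paper splits into cases according to whether $k|_{S}$ is injective (taking the inverse in the injective case, and two sections differing at one point otherwise), whereas you start from an arbitrary section and perturb it only if it happens to be linear, which neatly unifies the two cases.
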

\begin{proof}
Let us denote $S := k^{-1}(V) \cap Y^{\perp}$. It follows from the property \eqref{theorem-b-decomposition} that the restriction $k|_S\colon S \to V$ is surjective. Let us first suppose that this restriction is also injective. Then the mapping $k|_S\colon S\to V$ is bijective and we can consider its inverse  $f\colon V\to S$. By our assumption, the set $S$ is not a
linear subspace, and therefore, the mapping $f$ is not linear. Moreover, if follows directly from Lemma~\ref{sipquotient} that $f$ preserves the semi-inner product for any two vectors from $V$.

Now, we assume that $k|_S\colon S \to V$ is not injective. In this case,
there are two vectors $z_1, z_2 \in  S$ such that $z_1\neq z_2$ and $[z_1]=[z_2] \in V$.

For all $x,y\in X$ we have $[x]\!=\![y]$ or $[x]\cap[y]\!=\!\emptyset$. Therefore, by the axiom of choice, there exists a subset $S'\!\subseteq\! S$ such that for each $[v] \in V \setminus \{[z_1]\}$ there exists exactly one $z \in S$ such that $[z]=[v]$.

Hence, a restriction $k|_{S'}\colon S'\to V\setminus \big\{[z_1]\big\}$ is a bijection. Now, we define two mappings $f_1,f_2\colon V\to S$  as:
\begin{center}
$f_1\big([x]\big):=\left\{\begin{array}{ccl}\big(k|_{S'}\big)^{-1}\big([x]\big)& {\rm if}& [x]\neq [z_1]=[z_2],\\
z_1& {\rm if}& [x]= [z_1]=[z_2]\end{array} \right.$
\end{center}
and similarly
\begin{center}
$f_2\big([x]\big):=\left\{\begin{array}{ccl}\big(k|_{S'}\big)^{-1}\big([x]\big)& {\rm if}& [x]\neq [z_1]=[z_2],\\
z_2& {\rm if}& [x]=[z_1]=[z_2].\end{array} \right. $
\end{center}
From the definition of $f_1$ and $f_2$, it follows that $f_1(v)=f_2(v)$ for all $v \in V$ such that $[v] \neq [z_1]$ and moreover $f_1(v) \neq f_2(v)$ for $[v]=[z_1]=[z_2]$. Two linear mappings can not differ at exactly one point. So, at least one of $f_1, f_2$ is not linear. Furthermore, again from Lemma \ref{sipquotient}, we can conclude that both of these mappings preserve the semi-inner product on $V$. This finishes the proof.
\end{proof}

\section{Sum of Banach spaces without the property (SL)}
\label{sectionkonstrukcja}

Let $\big(X_i, \|\! \cdot\!\|_i\big)_{i=1}^{\infty}$ be a sequence of Banach spaces and let $1 < p < \infty$. The $\ell_p$-sum of $X_i$ is denoted by
$$X_1 \oplus_p X_2 \oplus_p X_3 \oplus_p \ldots$$
and it is defined as a normed space consisting of sequences $(x_i)_{i=1}^{\infty}$ such that $x_i \in X_i$ for every $i \geq 1$ and $\big(||x_i||_i\big)_{i=1}^{\infty} \in \ell_p$. Norm is defined as 
$$\big\|(x_i)_{i=1}^{\infty}\big\|:=\left ( \sum_{i=1}^{\infty} ||x_i||^p_i \right )^\frac{1}{p}.$$
In the next lemma, we express the semi-inner product on the $\ell_p$-sum of Banach spaces by the semi-inner products of the individual spaces.
\begin{lemma}
\label{sipsuma}
Let $\big(X_i, \|\! \cdot\!\|_i\big)_{i=1}^{\infty}$ be a sequence of smooth Banach spaces and let $1 < p < \infty$. Suppose that $\sip{\cdot}{\cdot}_i$ is the semi-inner product in $X_i$ for $i \geq 1$. Then, the $\ell_p$-sum of $X_i$
$$X:=X_1 \oplus_p X_2 \oplus_p \ldots$$
is a smooth Banach space with the semi-inner product given by
$$\sip{x}{y} = \frac{1}{||y||^{p-2}} \left ( \sum_{i=1}^{\infty} ||y_i||^{p-2}_i \cdot \sip{x_i}{y_i}_i \right ),$$ 
where $x=(x_i)_{i=1}^{\infty}$ and $y=(y_i)_{i=1}^{\infty}$ are elements of $X_1 \oplus_p X_2 \oplus_p X_3 \oplus_p \ldots$.
\end{lemma}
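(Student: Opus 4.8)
The plan is to identify the unique supporting functional at each nonzero point of $X$ and then read off the semi-inner product from \eqref{sip-pattern-in-smooth-sp}; this simultaneously establishes smoothness and the asserted formula. Completeness of $X$ is standard, so I would only recall that the $\ell_p$-sum of Banach spaces is again complete. The key tool is the well-known duality
$$X^* = X_1^* \oplus_q X_2^* \oplus_q \ldots, \qquad \tfrac1p + \tfrac1q = 1,$$
under which a functional $\psi = (\psi_i)_{i=1}^{\infty}$ acts by $\psi(x) = \sum_{i=1}^{\infty}\psi_i(x_i)$ and has norm $\|\psi\| = \left(\sum_{i=1}^{\infty}\|\psi_i\|^q\right)^{1/q}$.

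Fix $y = (y_i)_{i=1}^{\infty} \in X$ with $y \neq 0$ and let $\psi = (\psi_i)$ be any supporting functional at $y$, i.e. $\|\psi\|=1$ and $\psi(y)=\|y\|$. The heart of the argument is to show that $\psi$ is forced to equal
$$\psi_i = \frac{\|y_i\|_i^{p-1}}{\|y\|^{p-1}}\,\varphi_{y_i},$$
where $\varphi_{y_i}$ is the unique (by smoothness of $X_i$) supporting functional at $y_i$. To see this I would run the chain
$$\|y\| = \psi(y) = \sum_i \psi_i(y_i) \leq \sum_i \|\psi_i\|\,\|y_i\|_i \leq \|\psi\|\cdot\|y\| = \|y\|,$$
the last step being Hölder's inequality with exponents $q$ and $p$. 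Since the two ends coincide, both inequalities are equalities. Equality in the first forces $\psi_i(y_i) = \|\psi_i\|\,\|y_i\|_i$ for each $i$, which whenever $y_i \neq 0$ and $\psi_i \neq 0$ says that $\psi_i/\|\psi_i\|$ is a supporting functional at $y_i$, hence equals $\varphi_{y_i}$. Equality in Hölder's inequality forces $(\|\psi_i\|^q)$ and $(\|y_i\|_i^p)$ to be proportional, and with $\|\psi\|=1$ this pins down $\|\psi_i\| = \|y_i\|_i^{p-1}/\|y\|^{p-1}$ (using $p/q = p-1$). This yields the displayed expression for $\psi$, proving in particular that the supporting functional is unique; as $y\neq 0$ was arbitrary, $X$ is smooth.

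With $\psi = \varphi_y$ identified, the formula \eqref{sip-pattern-in-smooth-sp} gives
$$\sip{x}{y} = \|y\|\,\varphi_y(x) = \|y\|\sum_i \frac{\|y_i\|_i^{p-1}}{\|y\|^{p-1}}\,\varphi_{y_i}(x_i) = \frac{1}{\|y\|^{p-2}}\sum_i \|y_i\|_i^{p-1}\varphi_{y_i}(x_i),$$
and rewriting $\|y_i\|_i^{p-1}\varphi_{y_i}(x_i) = \|y_i\|_i^{p-2}\cdot\|y_i\|_i\varphi_{y_i}(x_i) = \|y_i\|_i^{p-2}\sip{x_i}{y_i}_i$ recovers the asserted expression. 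As a consistency check one may instead verify directly that the right-hand side satisfies (sip1)--(sip4): homogeneity is immediate, (sip4) reduces to the norm identity $\sum_i \|x_i\|_i^p = \|x\|^p$, and (sip3) follows from the same Hölder inequality; by uniqueness of the semi-inner product on the smooth space $X$ this reproves the formula.

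I expect the main obstacle to be the careful bookkeeping of the equality cases in the second paragraph. In particular one must treat the indices with $y_i = 0$, where the corresponding $\psi_i$ must vanish and the term $\|y_i\|_i^{p-2}\sip{x_i}{y_i}_i$ is interpreted as $0$ (consistent since $\|y_i\|_i^{p-1}\varphi_{y_i}(x_i)=0$ there), and, over $\C$, one should note that $\psi_i(y_i) = \|\psi_i\|\,\|y_i\|_i$ already forces the value to be real and norm-attaining, so that smoothness of $X_i$ applies. Absolute convergence of all the series involved follows from the same Hölder estimate, so no separate convergence argument is needed.
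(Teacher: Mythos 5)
Your argument is correct, but it proceeds along a genuinely different route from the paper. The paper's proof is shorter and more indirect: it quotes the completeness of the $\ell_p$-sum, cites Zachariades for the smoothness of $X$, and then, since a smooth space carries a unique semi-inner product, merely \emph{verifies} that the displayed formula satisfies (sip1)--(sip4), the only nontrivial point being (sip3), which is exactly the H\"older estimate you mention in your ``consistency check'' paragraph. You instead \emph{derive} the formula: using the duality $X^*=X_1^*\oplus_q X_2^*\oplus_q\ldots$ you pin down the supporting functional at $y$ through the equality cases of H\"older's inequality, obtaining $\psi_i=\|y_i\|_i^{p-1}\|y\|^{1-p}\varphi_{y_i}$, and then read off the semi-inner product from \eqref{sip-pattern-in-smooth-sp}. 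Your approach buys a self-contained proof of smoothness (uniqueness of the supporting functional comes out of the computation, so no external reference is needed) and explains where the formula comes from, at the cost of invoking the duality of $\ell_p$-sums and of the bookkeeping around the equality cases (indices with $y_i=0$, the complex-scalar issue), all of which you handle correctly. The paper's route avoids duality entirely but must cite smoothness from the literature. Both proofs are complete; if you wanted to match the paper's economy you could keep only your last paragraph, since uniqueness of the semi-inner product on a smooth space makes the direct verification of (sip1)--(sip4) sufficient.
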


\begin{proof}
Completeness of $X$ is well-known, while smoothness follows from a classical paper \cite{zachariades}. Hence, the semi-inner product on $X$ is unique and it is enough to verify that the given mapping satisfies all of the conditions (sip1-4). Properties (sip1), (sip2) and (sip4) follow easily from the fact that $\sip{\cdot}{\cdot}_i$ is the semi-inner product on $X_i$. To establish the condition (sip3), it is enough to use the H\"older's inequality. So, it follows that
\begin{align*}
\big|\sip{x}{y}\big| &= \frac{1}{||y||^{p-2}} \left| \sum_{i=1}^{\infty} ||y_i||^{p-2}_i \cdot \sip{x_i}{y_i}_i \right| \leq \frac{1}{||y||^{p-2}} \sum_{i=1}^{\infty} ||y_i||^{p-2}_i \cdot \big|\sip{x_i}{y_i}_i \big|\\
&\leq\frac{1}{||y||^{p-2}} \sum_{i=1}^{\infty} ||y_i||^{p-2}_i \cdot ||x_i||_i||y_i||_i =\frac{1}{||y||^{p-2}} \sum_{i=1}^{\infty} ||y_i||^{p-1}_i \cdot ||x_i||_i\\
&\leq \frac{1}{||y||^{p-2}} \left ( \sum_{i=1}^{\infty} ||y_i||^p_i \right )^{\frac{p-1}{p}} \left ( \sum_{i=1}^{\infty} ||x_i||^p_i \right )^{\frac{1}{p}}=||x|| \cdot ||y||,
\end{align*}
and we are done.
\end{proof}

Now, we  are in position to prove our first main result.

\emph{Proof of Theorem \ref{twkonstrukcja}}.
Let $X$ be any three-dimensional smooth and strictly convex normed space, which is not a Hilbert space. Since $X$ is not an inner-product space, there exits a one-dimensional subspace $Y \subseteq X$ such that $Y^{\bot}$ is not a linear subspace (otherwise, there would be a projection of norm $1$ on every subspace of $X$). We denote by $W:=X/Y$ the quotient space. It follows from Lemma \ref{lemquotient}, applied for $V=W$, that there is a non-linear function $f\colon W\to X$ such that
\begin{align}\label{sip-mapping-f-w-x}
\sip{f(u)}{f(w)}_{_X}=\sip{u}{w}_{_W}
\end{align}
for all $u, w \in W$.
Now, we define
\begin{center}
$Z:=\Big(W\oplus_p W\oplus_p W\oplus_p\ldots\Big)\oplus_p\Big(X\oplus_p X\oplus_p X\oplus_p\ldots\Big)$,
\end{center}
to be the $\ell_p$-sum as in the statement of Theorem \ref{twkonstrukcja}. The norm of $Z$ is thus given by
\begin{center}
$\|z\|:=\left(\sum\limits_{k=1}^\infty \|w_k\|_{_W}^p+\sum\limits_{j=1}^\infty \|x_j\|_{_X}^p\right)^{\frac{1}{p}}$
\end{center}
where $z=\big((w_1,w_2,\ldots),(x_1,x_2,\ldots)\big)\in Z$. It follows again from \cite{zachariades}, that $Z$ is a uniformly smooth and uniformly convex Banach space. So, by the previous Lemma \ref{sipsuma}, the unique semi-inner product $\sip{\cdot}{\cdot}\colon Z\times Z\to\K$ is given by the formula
\begin{align}\label{sip-in-space-z}
\sip{z}{a}:=\frac{1}{\|a\|^{p-2}}\left(\sum\limits_{k=1}^\infty \|b_k\|^{p-2}_{_W}\cdot\sip{w_k}{b_k}_{_W}+\sum\limits_{j=1}^\infty \|c_j\|^{p-2}_{_X}\cdot\sip{x_j}{c_j}_{_X}\right),
\end{align}
where
\begin{align*}
z&=\big((w_1,w_2,\ldots),(x_1,x_2,\ldots)\big)\in Z\ \  {\rm and}\\
a&=\big((b_1,b_2,\ldots),(c_1,c_2,\ldots)\big)\in Z.
\end{align*}
We are ready to construct a non-linear function $h\colon Z\to Z$ preserving the semi-inner product $\sip{\cdot}{\cdot}$. So, let us consider a mapping
\begin{align}\label{mapping-in-space-z}
h\big((w_1,w_2,\ldots),(x_1,x_2,\ldots)\big):=\big((w_2,w_3,\ldots),(f(w_1),x_1,x_2,\ldots)\big).
\end{align}
By combining \eqref{sip-in-space-z} with \eqref{sip-mapping-f-w-x} and \eqref{mapping-in-space-z}, we see that  $h$ preserves the semi-inner product on $Z$. Furthermore, since $f$ is not linear, $h$ is not linear as well and the the proof is complete.\qed

\begin{remark}
\label{remarklp}
It is quite easy to see that the renorming of $\ell_p$, like in Theorem \ref{twkonstrukcja}, can be chosen to be arbitrarily close to the standard norm. Indeed, let $1 < p < \infty$ be fixed. By a result of Bosznay and Garay (see \cite{bosznay}) it is possible to find a strictly convex and smooth norm $|| \cdot ||$ in $\mathbb{R}^3$ such that
$$||x|| \leq ||x||_p \leq (1 + \varepsilon)||x|| \: \text{ for every } x \in \mathbb{R}^n$$
and for every one-dimensional subspace $Y \subseteq X$, the set $Y^{\perp}$ is not a linear subspace. It can be easily verified, that for $Y = \{ (t, 0, 0) \ : t \in \mathbb{K}\ \}$, the norm in the quotient space $X/Y$ can be arbitrarily close to the two-dimensional $\ell_p$-norm (when $\varepsilon \to 0$). This means, that the  space $\ell_p$ has a smooth and strictly convex renorming, that is arbitrarily close to the standard norm; and for this renorming the property (SL) does not hold.
\end{remark}

\section{Characterization of the property (SL) and its applications}
\label{sectionchar}
The aim of this section is to prove Theorem \ref{twchar}, which characterizes the property (SL) in the class of reflexive smooth Banach spaces. Next, we will present some applications of it. Before proving Theorem \ref{twchar}, we need a simple lemma concerning linear isometries.

\begin{lemma}\label{lem-isom-t-pres-sip-x}
Let $X, Y$ be smooth Banach spaces and let $T\colon X \to Y$ be a linear surjective isometry. Then, for all $x, y \in X$ the equality
$$\sip{T(x)}{T(y)}_{Y}= \sip{x}{y}_X$$
is satisfied.
\end{lemma}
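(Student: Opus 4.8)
The plan is to use the explicit formula \eqref{sip-pattern-in-smooth-sp} for the semi-inner product in a smooth space, which reduces everything to a statement about supporting functionals. Since $T$ is a linear surjective isometry, it preserves norms, so $\|T(y)\| = \|y\|$ for every $y \in X$. Thus the only nontrivial point is to show that the supporting functional at $T(y)$ in $Y$ is transported correctly by $T$; more precisely, I expect the key identity to be
$$\fii_{T(y)} \circ T = \fii_y,$$
understood as an equality of functionals in $X^*$. Once this is established, the result follows by a direct computation: for $y \neq 0$,
\begin{align*}
\sip{T(x)}{T(y)}_Y = \|T(y)\| \cdot \fii_{T(y)}\big(T(x)\big) = \|y\| \cdot \big(\fii_{T(y)} \circ T\big)(x) = \|y\| \cdot \fii_y(x) = \sip{x}{y}_X,
\end{align*}
and the case $y = 0$ is trivial since both sides vanish.

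To prove the functional identity, I would exploit the uniqueness of the supporting functional in a smooth space. First I would verify that $\fii_{T(y)} \circ T$ is a candidate supporting functional at $y$. It is linear as a composition of linear maps. For its norm, since $T$ is a surjective isometry we have $\|T\| = 1$ and hence $\|\fii_{T(y)} \circ T\| \leq \|\fii_{T(y)}\| \cdot \|T\| = 1$; the reverse inequality will follow from the norming property once we check the value at $y$. Indeed,
$$\big(\fii_{T(y)} \circ T\big)(y) = \fii_{T(y)}\big(T(y)\big) = \|T(y)\| = \|y\|,$$
using that $\fii_{T(y)}$ is the supporting functional at $T(y)$ together with $\|T(y)\| = \|y\|$. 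This forces $\|\fii_{T(y)} \circ T\| = 1$ as well. Therefore $\fii_{T(y)} \circ T$ is a norm-one functional attaining $\|y\|$ at $y$, i.e. a supporting functional at $y$, and by smoothness of $X$ it must equal the unique such functional $\fii_y$.

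The main (and essentially only) obstacle is confirming that surjectivity of $T$ is genuinely needed, or whether injectivity suffices — here surjectivity guarantees that $T$ is an isometric isomorphism so that $\fii_{T(y)} \circ T$ is a well-defined element of $X^*$ with the right norm, and that the target $Y$ being smooth lets us invoke uniqueness on the $Y$-side for $\fii_{T(y)}$ itself. No delicate estimate is involved; the entire argument is a clean application of the uniqueness of supporting functionals in smooth spaces, so I anticipate no real difficulty beyond assembling these observations in the correct order.
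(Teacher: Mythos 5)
Your argument is correct and is essentially the paper's own proof run in the opposite direction: the paper sets $g:=\varphi_y\circ T^{-1}\in Y^*$ and uses smoothness of $Y$ to conclude $g=\varphi_{T(y)}$, whereas you consider $\varphi_{T(y)}\circ T\in X^*$ and use smoothness of $X$ to conclude it equals $\varphi_y$; the concluding computation via \eqref{sip-pattern-in-smooth-sp} is identical. Both versions rest on the same key fact — uniqueness of the supporting functional in a smooth space — so this is the same approach.
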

\begin{proof}
Since $X, Y$ are smooth, the semi-inner products in these spaces are uniquely determined. For a fixed nonzero $y \in X$, let us denote $g:= \varphi_y \circ T^{-1} \in Y^*$. Then
$$g(T(y)) = \varphi_y(y)=\|y\|=\|T(y)\|$$
and
$$\|g\| \leq \|\varphi_y\| \cdot \|T^{-1}\| = 1.$$
Thus, $g\in Y^*$ is the unique supporting functional for $T(y)$. Therefore, $g=\varphi_{T(y)}$.

Moreover, from (\ref{sip-pattern-in-smooth-sp}) we know that $\sip{x}{y}_X = ||y|| \varphi_y(x)$ and
\begin{align*}
\sip{T(x)}{T(y)}_Y &=\|T(y)\| \varphi_{T(y)}\big(T(x)\big) =\|y\| \varphi_{T(y)}\big(T(x)\big)=\|y\| g\big(T(x)\big)\\
&=\|y\| (\varphi_y \circ T^{-1})\big(T(x)\big)=\|y\|\varphi_y(x)=\sip{x}{y}_X
\end{align*}
and the proof is finished.
\end{proof}

\emph{Proof of Theorem \ref{twchar}}. We need to prove that two following conditions are equivalent:
\begin{enumerate}
\item There exists a non-linear mapping $f:X \to X$ that preserves the semi-inner product on $X$.
\item There exists a proper closed subspace $Y \subseteq X$ and a closed linear subspace $V \subseteq X/Y$ of the quotient space such that the set $k^{-1}(V) \cap Y^{\bot}$ is not a linear subspace and there exists a linear surjective isometry $T:X \to V$.
\end{enumerate}

We start with the implication $(2) \Rightarrow (1)$. From Lemma \ref{lemquotient} we know that there exists a non-linear mapping  $g: V \to X$ satisfying
\begin{align}
\ \sip{ g\big([u]\big)}{g\big([w]\big)}_X=\sip{u}{w}_V \: \text{ for all } \: u, v \in V.
\end{align}

Now, we define $f\colon X \to X$ as
$$f(x):= g\big(T(x)\big).$$
Since $T$ is linear and surjective, it is clear that $f$ is not linear. Moreover, by Lemma \ref{lem-isom-t-pres-sip-x}, mapping $T$ satisfies
$$\sip{T(x)}{T(y)}_V = \sip{x}{y}_X,$$
for every $x, y \in X$. Thus
$$\sip{f(x)}{f(y)}_X = \sip{g\big(T(x)\big)}{g\big(T(y)\big)}_X\stackrel{(4.1)}{=}\sip{T(x)}{T(y)}_V=\sip{x}{y}_X.$$
This concludes the proof of the implication $(2) \Rightarrow (1)$.

Now, let us suppose that the condition (1) is satisfied. We define $Y$ as
$$Y = \{ y \in X : \sip{y}{f(w)} = 0 \text{ for every } w \in X \}.$$
It follows directly from the properties of the semi-inner product, that $Y$ is a closed linear subspace of $X$. Let us fix two arbitrary scalars $\alpha, \beta \in \mathbb{K}$ and two arbitrary vectors $x, y \in X$. We shall prove that
\begin{align}
\label{kombinacja}
f(\alpha x + \beta y) - \alpha f(x) - \beta f(y) \in Y.
\end{align}
Indeed, if $w\in X$, then we have
\begin{align*}
&\sip{f(\alpha x + \beta y) - \alpha f(x) - \beta f(y)}{f(w)}=\\
&=
\sip{f(\alpha x + \beta y)}{f(w)}-
\alpha\sip{f(x)}{f(w)}-
\beta\sip{f(y)}{f(w)}\\
&=
\sip{\alpha x + \beta y}{w}-
\alpha\sip{x}{w}-
\beta\sip{y}{w}=0
\end{align*}
and (\ref{kombinacja}) is proved.

It is now evident that the mapping
$$X \ni x \to [f(x)] \in X/Y$$
is linear.  We will check that it is an isometry. Because $f(X)\bot Y$, we have $\dist\big(f(x),Y\big)=\|f(x)\|$ for all $x\in X$ from (\ref{b-ort-sip}). Hence
$$\big\|[f(x)]\big\| = \dist(f(x), Y) =\|f(x)\| = \|x\|,$$
where the last equality follows from the fact that $f$ preserves the norm.

Now, let $V$ be a linear subspace of $X/Y$, spanned by vectors of the form $[f(x)]$, where $x \in X$. Then $T\colon X \to V$ defined as $T(x):=[f(x)]$ is a desired linear surjective isometry from $X$ to $V$. Clearly, $V$ is a closed subspace, as an image of $X$ under a surjective linear isometry.

To finish the proof of the implication (1)$\Rightarrow$(2), it remains to show that the set $k^{-1}(V) \cap Y^{\perp}$ is not a linear subspace.

Let us assume otherwise. From the definitions of $Y$ and $V$ we have that $f(x) \in k^{-1}(V) \cap Y^{\perp}$ for every $x \in X$. If this set would form a linear subspace, then we would have also that
$$f(\alpha x + \beta y) - \alpha f(x) - \beta f(y) \in k^{-1}(V) \cap Y^{\perp} \subseteq Y^{\perp},$$
for every $\alpha, \beta \in \mathbb{K}$ and $x, y \in X$. From (\ref{kombinacja}) we know that vector of this form is also an element of $Y$ and hence it must be equal to $0$. But this means that $f$ is a linear mapping, which contradicts the assumption. This contradiction finishes the proof of the second implication. \qed

\begin{remark}
\label{remarkproblem}
At least in the strictly convex case it seems to be essential, that in Theorem \ref{twchar} we refer to a subspace $V \subseteq X/Y$ of a quotient space, and not just the quotient space itself. Faulkner and Huneycutt posed the following problem in 1978 (see \cite{faulkner}): is it true that if $X$ is a smooth reflexive Banach space and $Y \subseteq X$ is a closed subspace isometric to $X$, then $Y^{\perp}$ is a linear subspace? If $Y \subseteq X$ is such that $X/Y$ isometric to $X$, then by taking duals we get that $Y_0 \subseteq X^{*}$ is isometric to $X^{*}$, where 
$$Y_0 = \{ \varphi \in X^* \ : \ \varphi|_Y \equiv 0 \}.$$
If $X$ is strictly convex, then $X^*$ is smooth. In this case, if the problem of Faulkner and Huneycutt has the positive answer, we get that $Y^{\perp}_0$ is a linear subspace of $X^*$. From here, it is now quite easy to establish that $Y^{\perp}$ is also a linear subspace. Thus, it is not reasonable to expect that some easy example of the space without the property (SL) can be found, where only the quotient space (and not its subspace) is used. It should be noted that the problem of Faulkner and Huneycutt has positive answer in some particular cases, but in the full generality it remains open until this day. 
\end{remark}

\section{Some consequences of Theorem \ref{twchar}}
\label{sectionconsequences}

In this section we present some applications of Theorem \ref{twchar}. We begin with proving Theorem \ref{twlp}, which says that the classical $\ell_p$ space have the property (SL) for $1 < p < \infty$.

\emph{Proof of Theorem \ref{twlp}}.

By the Theorem \ref{twchar},  it is enough to prove that, if $Y \subseteq \ell_p$ is a closed linear subspace and $V \subseteq \ell_p/Y$ is a closed linear subspace of the quotient space, such that $V$ is linearly isometric to $\ell_p$, then the set 
$$S:=k^{-1}(V) \cap Y^{\perp}$$
is a linear subspace of $\ell_p$. 

Indeed, let $[z_1], [z_2], \ldots \in V$ be the image of the canonical unit basis of $\ell_p$ under the isometrical isomorphism. By the reflexivity of $\ell_p$ and \eqref{theorem-b-decomposition} we can assume that $z_i \in Y^{\perp}$ for every $i \geq 1$. In this case, we have $z_i \in S$ for every $i \geq 1$. Moreover, since $z_i\bot Y$, we get
$$1=\big\|[z_i]\big\|=\dist(z_i, Y) = ||z_i||.$$
We shall prove that 
$$k^{-1}(V) \cap Y^{\perp} = \overline{\lin \{ z_i \ : \ i=1,2,3, \ldots \}},$$
which will give us the desired conclusion, i.e. $k^{-1}(V) \cap Y^{\perp}$ is a linear subspace of $\ell_p$.

It is easy to see that if $x \in S$, then $\alpha x \in S$ for every $\alpha \in \mathbb{K}$. Therefore, it is enough to prove the following claim for all disjoint and finite $A, B \subseteq \mathbb{N}$ and all scalars $a_i, b_i$:
\begin{align}
\label{implikacja}
\sum_{i \in A}a_i z_i \in Y^{\perp}, \: \: \sum_{i \in B} b_iz_i \in Y^{\perp}\quad \Rightarrow\quad \sum_{i \in A}a_i z_i + \sum_{i \in B} b_iz_i \in Y^{\perp} 
\end{align}
Indeed, if the claim above is true, then by the fact that $S$ is closed, we immediately get that $\overline{\lin \{ z_i \ : \ i \geq 1 \}} \subseteq S$. On the other hand, if $v \in k^{-1}(V) \cap Y^{\perp}$, then $v \in Y^{\perp}$ and $[v] \in V$. Since vectors of the form $[z_1],[z_2],\ldots$ form the basis of $V$, we can write
$$[v] = \sum_{i=1}^{\infty} a_i [z_i] =  \left [\sum_{i=1}^{\infty} a_iz_i \right ],$$
for some scalars $a_i$. From the strict convexity of the space $\ell_p$ it follows that the representation in the form $y+y'$ (where $y \in Y$ and $y' \in Y^{\perp}$) is unique. Since $v\in Y^{\bot}$ and $\sum_{i=1}^{\infty} a_iz_i\in Y^{\bot}$, we must have $v = \sum_{i=1}^{\infty} a_iz_i$ and therefore $k^{-1}(V) \cap Y^{\perp} \subseteq \overline{\lin \{z_i \ : \ i \geq 1 \}}$.

Thus, it is enough to prove the implication (\ref{implikacja}).
Let us define $x:= \sum_{i \in A}a_i z_i$ and $y:=\sum_{i \in B} b_iz_i$ for disjoint and finite $A, B \subseteq \mathbb{N}$ and scalars $a_i, b_i$. If $x, y \in Y^{\perp}$, then it follows that
\begin{align*}
\dist(x+y, Y)^p &+ \dist(x-y, Y)^p = ||[x+y]||^p + ||[x-y]||^p\\
&=\left | \left | \sum_{i \in A}a_i [z_i] + \sum_{i \in b}b_i [z_i]\right | \right |^p + \left | \left |\sum_{i \in A}a_i [z_i] - \sum_{i \in b}b_i [z_i]\right |\right |^p\\
&= 2 \left ( \sum_{i \in A} |a_i|^p + \sum_{i \in B} |b_i|^p \right )=2\big(||[x]||^p + ||[y]||^p\big)\\
=& 2\big(\dist(x, Y)^p + \dist(y, Y)^p\big) = 2 \big(||x||^p + ||y||^p\big),
\end{align*}
where we have used respectively: the norm in the quotient space, the representation of $x$ and $y$ and linearity of the canonical map, the fact that $[z_1],[z_2],\ldots$ form a basis of $V$ and the assumption that $x, y$ belong to $Y^{\perp}$. Thus, from the equalities above, it follows that
\begin{equation}
\label{rownosc}
||x+y-m_1||^p + ||x-y-m_2||^p = 2(||x||^p + ||y||^p),
\end{equation}
where $m_1$, $m_2$ are best approximations in $Y$ for $x+y$ and $x-y$ respectively. Now, we will refer to the following inequalities proved by Hanner (see \cite{hanner}): if $p \geq 2$ the inequality
\begin{equation}
    ||u+v||^p + ||u-v||^p \geq 2 ( ||u||^p + ||v||^p),
\end{equation}
is true for any $u, v \in \ell_p$ and if $p \leq 2$, then the reverse inequality holds.

Let us assume first that $p \geq 2$. In this case, using the fact that $0$ is the best approximation for $x$ and $y$ in $Y$ and the Hanner inequality, we obtain
$$2(||x||^p + ||y||^p) \leq 2\left ( \left | \left |x - \frac{m_1+m_2}{2} \right | \right |^p + \left | \left |y - \frac{m_1-m_2}{2}\right | \right |^p \right)$$
$$\leq ||x + y  - m_1||^p + ||x-y-m_2||^p = 2(||x|| + ||y||^p).$$
This shows that $\frac{m_1+m_2}{2}$ is the best approximation to $x$ and $\frac{m_1-m_2}{2}$ is the best approximation to $y$, but since $\ell_p$ is a  strictly convex space, the best approximation is unique and we have that $m_1=m_2=0$. This shows that $0$ is the best approximation also to $x+y$ and therefore $x+y \in Y^{\perp}$.

In the case $p<2$, we proceed similarly. More precisely, we have
\begin{align*}
2(||x||^p + ||y||^p)&= ||x+y-m_1||^p + ||x-y-m_2||^p \\
&\leq ||x+y||^p + ||x-y||^{p}\leq 2\big(\|x\|^p+\|y\|^p \big).
\end{align*}
So, again from the Hanner inequality, it follows that this inequality is in fact an equality and similarly like before we have $m_1=0$. This means that $x+y \in M^{\perp}$ and the proof is finished. \qed

Even if the space $\ell_p$ has the property (SL), by Theorem \ref{twkonstrukcja} there are some Banach spaces isomorphic to $\ell_p$, without the property (SL). Now, we are going to prove that there exists an infinite-dimensional uniformly smooth Banach space $X$, such that any smooth space isomorphic to $X$ has the property (SL).

Let us recall that infinite-dimensional Banach space $X$ is called \emph{hereditarily indecomposable} if no closed subspace of $X$ can be represented as the topological sum of two infinite dimensional closed subspaces. The existence of heredatarily indecomposable Banach space is very far from being obvious. In a seminal paper \cite{gowers} Gowers and Maurey constructed the first example of such Banach space, which allowed them to answer negatively a famous long-standing open problem in the theory of Banach spaces called \emph{unconditional basic sequence problem}. After this breakthrough in Banach space theory, the Gowers-Maurey space was studied and considered by several different authors. Proof of our next Theorem is a quick consequence of two results of Ferenczi (\cite{ferencziconvex} and \cite{ferencziquotient}).

\emph{Proof of Theorem \ref{twgm}.} In \cite{ferencziconvex} it was proved that there exists a uniformly convex heredatarily indecomposable Banach space -- let us denote it by $X_F$. Every uniformly convex space is reflexive, so $X_F$ is reflexive. Let $X:=X_F^*$ be the dual space. Then $X$ is uniformly smooth reflexive Banach space and we shall show that $X$ has the desired property. So, let $X_0$ be any smooth Banach space linearly isomorphic to $X$ and let $S\colon X_0 \to X$ be a linear isomorphism. Clearly, $X_0$ is a reflexive space. Suppose that there exists a non-linear mapping $f\colon X_0 \to X_0$ which preserves the semi-inner product on $X_0$. Then, by Theorem \ref{twchar}, there exists a proper closed subspace $Y_0 \subseteq X_0$ and a closed subspace $V_0 \subseteq X_0/Y_0$ of the quotient space such that the set $k^{-1}(V_0) \cap Y_0^{\bot}$ is not a linear subspace and $V_0$ is linearly isometric to $X_0$.

Denote $Y:=S(Y_0)$. Clearly, the linear isomorphism $S\colon X_0 \to X$ induces an isomorphism between the quotient spaces $X_0/Y_0$ and $X/Y$. Let $V \subseteq X/Y$ be an image of $V_0$ in this induced isomorphism. Since $X_0$ is isomorphic (even isometric) to $V_0$, it follows that $X$ is isomorphic to $V$. In other words, $X$ is isomorphic to some closed subspace of its quotient $X/Y$, where $Y$ is a proper closed subspace. Dually, this means that $X^*=X_F$ is isomorphic to the quotient of its proper subspace. However, by the result of Ferenczi (see Proposition 3 in \cite{ferencziquotient}) hereditarily indecomposable Banach space can not be isomorphic to the proper quotient of subspace of itself. This contradiction concludes the proof. \qedsymbol
\begin{remark}
We mentioned in the Introduction, that it had been proved in \cite{wojcik} that every finite-dimensional normed space has the property (SL). This fact can be deduced immediately from Theorem \ref{twchar}. Indeed, a finite dimensional normed space can not be isometric to the subspace of its proper quotient which has a strictly smaller dimension.
\end{remark}

\section{Acknowledgements}

The research of the first author was funded by the Priority Research Area SciMat under the program Excellence Initiative – Research University at the Jagiellonian University in Kraków. We are also grateful to Piotr Niemiec and Anna Pelczar-Barwacz for valuable remarks concerning the manuscript.

\bibliographystyle{amsplain}

\end{document}